\newcommand{\revision}[1]{{\color{black}#1}}
\newcommand{\kron}{\otimes}
\newtheorem{theorem}{Theorem}
\newtheorem{lemma}[theorem]{Lemma}
\newtheorem{corollary}[theorem]{Corollary}
\newtheorem{definition}[theorem]{Definition}
\newtheorem{remark}[theorem]{Remark}
\newtheorem{example}[theorem]{Example}
\newcommand{\R}{\mathbb{R}}
\newcommand{\T}{\mathrm{T}}
\DeclareMathOperator*{\maximize}{maximize}
\DeclareMathOperator*{\st}{subject\;to}
\DeclareMathOperator{\inter}{int}
\DeclareMathOperator{\bd}{bd}
\pgfplotsset{compat=1.15}
\definecolor{ffffzz}{rgb}{1.,1.,0.6}
\definecolor{qqqqtt}{rgb}{0.,0.,0.2}
\begin{document}

\begin{center}
\large{\bf{The smallest mono-unstable convex polyhedron with point masses has 8 faces and 11 vertices}}  \\[5mm]

\normalsize
D{\'a}vid Papp \\[2mm]
{Department of Mathematics, North Carolina State University, Raleigh, NC, USA. \\ \texttt{https://orcid.org/0000-0003-4498-6417}} \\
Box 8205, NC State University, Raleigh, NC 27695-8205, USA  \\
\texttt{dpapp@ncsu.edu} \\[9mm]

Krisztina Reg{\H o}s \\[2mm]
{Department of Morphology and Geometric Modeling and MTA-BME Morphodynamics Research Group, Budapest University of Technology and Economics, Budapest, Hungary} \\
 M\H uegyetem rakpart 1-3., Budapest, Hungary, 1111  \\
\texttt{regos.kriszti@gmail.com} \\[9mm]

G{\'a}bor Domokos \\[2mm]
{Department of Morphology and Geometric Modeling and MTA-BME Morphodynamics Research Group, Budapest University of Technology and Economics, Budapest, Hungary} \\
 M\H uegyetem rakpart 1-3., Budapest, Hungary, 1111 \\
\texttt{domokos@iit.bme.hu}  \\[9mm]

S{\'a}ndor Boz{\'o}ki \\[2mm]
{corresponding author \\ 
Institute for Computer Science and Control (SZTAKI), Eötvös Loránd Research Network, Kende street 13-17, Budapest, Hungary, 1111; \\
Corvinus University of Budapest} \\ 
\texttt{bozoki.sandor@sztaki.hu} \\
\end{center}

\newpage


\begin{abstract}
In the study of monostatic polyhedra, initiated by John H. Conway in 1966, the main question is to construct such an object with the minimal number of faces and vertices. By distinguishing between various material distributions and stability types, this expands into a small family of related questions. While many upper and lower bounds on the necessary numbers of faces and vertices have been established, none of these questions has been so far resolved.  Adapting an algorithm presented in \citep{Bozoki}, here we offer the first complete answer to a question from this family: by using the toolbox of semidefinite optimization to efficiently generate the hundreds of thousands of infeasibility certificates, we provide the first-ever proof for the existence of a monostatic polyhedron with point masses, having minimal number ($V=11$) of vertices (Theorem \ref{maintheorem}) and a minimal number ($F=8$) of faces. We also show that $V=11$ is the smallest number of vertices that a mono-unstable polyhedron can have in all dimensions greater than $1$ (Corollary \ref{thm:main-theorem-high-dim}).
\end{abstract}

\begin{keyword}
convex optimization \sep semidefinite optimization \sep polyhedron \sep static equilibrium \sep monostatic polyhedron \sep polynomial inequalities

\MSC[2020]{52B10, 37C20, 90C22, 90C20, 52A38}
\end{keyword}

\maketitle

\section{Introduction and the main result}

\subsection{Optimization and rigorous proofs}


Nonlinear optimization methodology has been impactful in both the core, classical application areas of operations research and in other interfacing disciplines due to flexibility with which it can be adapted to the needs of individual problems and areas:

\begin{itemize}

\item 
The toolkit of optimization has been successfully applied to problems that have not been previously regarded as optimization problems. Examples include establishing parameter identifiability and state observability in dynamical systems \citep{AugustPapachristodoulou2009}.


\item 
Advanced heuristic optimization methods have been applied to improve locally optimal solutions for challenging optimization problems (e.g., \cite{LopezBeasley2011} and \cite{Lai2020}); simultaneously, optimization algorithms have been applied to compute tight bounds and confirming (numerically) that the best found local optimal solutions are approximately globally optimal (e.g., \cite{Kurpel-2020}). Polynomial optimization and semidefinite programming has achieved remarkable success in this area. Pertinent examples include the breakthrough in Kuperberg's (still open) problem on the number of infinite cylinders touching a ball \citep[Section 3.1.3]{Firsching2016}, in which a long-standing conjecture was refuted by finding an unexpected feasible solution and bounds for point configurations of minimum energy \citep{deLaat2020}.

\item 
Optimization methods have been merged with the tools of computer assisted proofs to attain rigorous results.
The proof of rigorous bounds involves solving convex optimization problems that do not resemble the natural formulations used to obtain good feasible solutions \revision{and which facilitate the computation of rational solutions that can be verified in exact arithmetic} \citep{BomzeSchachingerUllrich2015,BomzeSchachingerUllrich2018}. The convexity of these auxiliary optimization problems means that the optimization methods yield \emph{easily and independently verifiable, rigorous proofs} of the bounds.
Such rigorous proofs are provided, for example, by \cite{BachocVallentin2008} 
for new bounds on the ``kissing problem’’ (the maximum number of non-intersecting unit spheres touching a fixed unit sphere in $n$ dimensions).
\end{itemize}

Our paper is in the same spirit: we apply optimization methodology to a problem (mechanical behavior of convex polyhedra) that has not been regarded as an optimization problem before, and we seek to improve the lower bound for the minimal number $V$ of vertices of a mono-unstable polyhedron. Ultimately, we succeed in finding the highest lower bound coinciding with the lowest upper bound, thus completely resolving the problem.  Our claim about the highest lower bound being $V=11$ is a sharp and rigorous result, all our claims can be verified by rational arithmetic.

\subsection{History of monostatic objects and the gap between upper and lower bounds}

Static balance points of a given object are points on its surface where, if supported on a horizontal plane, the object could be at rest. The numbers of various types of such balance points are often intuitively clear: for example a (fair) cubic dice has $S=6$
stable equilibrium positions on its faces \revision{and} $U=8$ unstable equilibrium positions at its vertices.
Despite being associated with mechanical experiments,
the concept of static equilibrium may also be defined in purely geometric terms. Here we focus on equilibria associated with convex polyhedra and, following \citet{balancing}, we can write:

\begin{definition}\label{def1}
Let $P \subseteq \R^{\revision{d}}$ be a $\revision{d}$-dimensional convex \revision{polytope}, let $\inter P$ and $\bd P$ denote its relative interior and boundary, respectively, and let $o \in \inter P$.  We say that $q \in \bd P$ is an \emph{equilibrium point} of $P$ with respect to $o$ if the \revision{hyper}plane \revision{$h$} through $q$ and perpendicular to the line segment $[o,q]$ supports $P$ at $q$. In this case $q$ is \emph{nondegenerate} if $\revision{h} \cap P$ is the (unique) \revision{$k$-dimensional face ($k=0,1, \dots d-1)$} of $P$ that contains $q$ in its relative interior. A nondegenerate equilibrium point $q$ is called \emph{stable} or \emph{unstable}, if $\dim (\revision{h} \cap P) = \revision{d-1}$, or $0$, respectively, \revision{otherwise we call it a \emph{saddle-type} equilibrium}. \revision{We denote the respective numbers of stable and unstable equilibria by $S$ and $U$}.
\end{definition}

Throughout this paper we deal only with nondegenerate equilibrium points with respect to the center of mass $g$ of polyhedra, so, we have $o\equiv g$, in which case equilibrium points gain intuitive interpretation as locations on $\bd P$
where $P$ may be balanced if it is supported on a horizontal surface (identical to the \emph{support plane} mentioned in Definition \ref{def1}) without friction in the presence of uniform gravity.  We will describe cases
associated with uniform density (which we will refer to as \emph{homogeneous}) and cases where each vertex carries a unit mass (which we will refer to as \emph{0-skeletons}).

We call a convex body \emph{monostatic} if it has either one stable  or one unstable static equilibrium position. Convex bodies with $S=1$  stable position are also
referred to as \emph{mono-stable} and with $U=1$ unstable position as \emph{mono-unstable}, whereas convex bodies with $S=U=1$ (i.e. one stable and one unstable balance position) are called \emph{mono-monostatic}.  The geometry of such convex bodies appears to be enigmatic: the existence of a convex, homogeneous mono-monostatic convex body was conjectured by V.I. Arnold in 1995 \citep{lunch} and proved in 2006 \citep{VarkonyiDomokos}.

The rich variety of related discrete problems was opened by a brief note by \citet{Conway0}, who asked whether homogeneous, mono-stable \revision{polytopes} existed at all and conjectured that homogeneous tetrahedra cannot be mono-stable. (Throughout the paper, we use the shorthand \revision{\emph{polytope}} to mean a three-dimensional bounded convex polyhedron.) Both problems have been resolved in \citep{Conway}, where the authors presented a mono-stable, convex, homogeneous polytope with $F=19$ faces and $V=34$ vertices and proved that homogeneous polytopes with $V=F=4$ vertices and faces (i.e., tetrahedra) cannot be mono-stable. That is, for mono-stable, homogeneous polytopes we have $V,F > 4$. It immediately became intuitively clear that the essence of the problem is the 
rather substantial $\emph{gap}$ between the respective values of $F$ and $V$. 

Various related problems have been investigated since. In the case of homogeneous mono-unstable polytopes \citep{balancing}, the lower bound $V,F>4$ was established and an example of $V=F=18$ was provided. For convex mono-unstable $0$-skeletons, the lower bound $F \geq 6, V\geq 8$ has been established in \citep{Bozoki} and an example with $F = 8, V= 11$ was provided in \citep{h0gomboc}.
As we can see, in all investigated problems about monostatic polyhedra the gap between the lower bounds and the best known example exists, in fact, this gap appears to be a characteristic feature of this class of problems.

\begin{remark}
The size of the gap may differ for the number $F$ of faces and for the number $V$ of vertices. In \citep{balancing} a theory is presented how these gaps can be merged and quantified by a single scalar in a meaningful manner, however, this is beyond the scope of our current manuscript.
\end{remark}

\subsection{The main result: closing the gap for mono-unstable 0-skeletons}
Our goal in the paper is to close this gap in the case of mono-unstable $0$-skeletons by proving the following result:
\begin{theorem}\label{maintheorem}
The smallest vertex number for which there exists a mono-unstable 0-skeleton in 3 dimensions is $V=11$.
\end{theorem}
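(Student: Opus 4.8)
The plan is to translate the existence of a mono-unstable 0-skeleton with few vertices into a large but finite family of polynomial feasibility problems and then certify each member of this family infeasible. The starting point is a purely combinatorial reading of Definition \ref{def1} for 0-skeletons. Writing $c=\tfrac{1}{V}\sum_{i=1}^V v_i$ for the centroid of the vertices $v_1,\dots,v_V$, I would first show that a vertex $v_i$ is a nondegenerate unstable equilibrium with respect to $c$ if and only if $\langle v_i-c,\,v_j-v_i\rangle<0$ for every vertex $v_j$ joined to $v_i$ by an edge. This follows because the support-plane condition at a vertex amounts to $v_i-c$ lying in the normal cone of $P$ at $v_i$, and that cone is cut out precisely by the inequalities indexed by the edges emanating from $v_i$; nondegeneracy forces the inequalities to be strict. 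Hence $v_i$ is \emph{not} unstable exactly when some neighbour $v_j$ satisfies $\langle v_i-c,\,v_j-v_i\rangle\ge 0$, and \emph{mono-unstable} means that exactly one vertex meets the all-neighbours condition.

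Since \citep{Bozoki} already establishes $V\ge 8$, it suffices to exclude $V\in\{8,9,10\}$. For each such $V$ I would enumerate every combinatorial type of $3$-polytope on $V$ vertices; by Steinitz's theorem these correspond to the $3$-connected planar graphs together with their facial structure, and there are $257$, $2606$ and $32300$ of them, respectively. For a fixed type $T$ the requirement that the points $v_1,\dots,v_V\in\R^3$ be in convex position and realize exactly the facet--vertex incidences of $T$ can be written as a conjunction of sign conditions on the $4\times 4$ orientation determinants of vertex quadruples, i.e.\ as the prescription of a chirotope. This simultaneously fixes which index pairs $(i,j)$ are edges, and therefore which inner products enter the equilibrium inequalities above.

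For each type $T$ and each candidate apex $v_k$, the statement ``$v_k$ is the unique unstable vertex'' splits into the strict inequalities $\langle v_k-c,\,v_j-v_k\rangle<0$ over the neighbours of $v_k$, together with the demand that every other vertex $v_i$ possess at least one \emph{witnessing} neighbour $v_j$ with $\langle v_i-c,\,v_j-v_i\rangle\ge 0$. The latter existential is a disjunction, so I would branch on a choice of witnessing neighbour for each of the $V-1$ non-apex vertices, converting the whole condition into a conjunction of polynomial (in)equalities in the coordinate unknowns. Ranging over all types $T$, all apices $k$, and all witness assignments produces the hundreds of thousands of systems referred to in the abstract; a mono-unstable 0-skeleton with $V\le 10$ exists if and only if at least one of these systems has a real solution. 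For each system I would, adapting the method of \citep{Bozoki}, use semidefinite optimization to search for a Positivstellensatz (sum-of-squares) certificate of infeasibility, namely a nonnegative combination of the constraint polynomials that equals $-1$; such a certificate is short and can be re-checked independently. Combinatorial automorphisms of $T$ and the known bound $F\ge 6$ are used to prune the case list.

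Combined with the realization at $V=11$ supplied by \citep{h0gomboc}, the infeasibility of every system for $V\in\{8,9,10\}$ shows that $V=11$ is minimal. I expect the main obstacle to be computational rather than conceptual, in two respects. First, the encoding must be both sound and complete: the chirotope constraints have to capture exactly the convex realizations of each $T$ while ruling out coplanar or otherwise degenerate configurations, so that the derived equilibria are genuinely nondegenerate and no realizable case is overlooked. Second, turning the SDP output into a rigorous proof requires rounding the numerically computed certificates to \emph{exact} rational sum-of-squares certificates and re-verifying them symbolically; only then does the union over all branches constitute a proof rather than numerical evidence. Keeping this verification tractable across the full case list---via symmetry reduction and careful problem formulation---is the practical crux.
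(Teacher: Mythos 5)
Your high-level strategy (finite case split plus SDP-generated infeasibility certificates, combined with the $V=11$ construction of \citep{h0gomboc}) is in the paper's spirit, but you missed the key reduction that makes the proof feasible, and your certification step has a genuine gap. The paper's engine is Theorem~\ref{thm:BDKR} (from \citep{Bozoki}): with the centroid at the origin, a vertex $r_i$ fails to be a nondegenerate unstable equilibrium exactly when \emph{some other vertex} $r_j$ --- not necessarily an edge-neighbour --- satisfies $(r_i-r_j)^\T r_i\le 0$; this is because the support-plane condition at $r_i$ compares $r_j^\T r_i$ with $r_i^\T r_i$ over all vertices, which by convexity is equivalent to your normal-cone condition over edges. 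Moreover $r_j^\T r_i\ge\|r_i\|^2$ forces $\|r_j\|\ge\|r_i\|$, so after sorting the vertices by decreasing norm every witness may be assumed to have \emph{smaller index}, and the largest-norm vertex is automatically the unstable one. This obliterates your entire combinatorial layer: no Steinitz enumeration of the $257+2606+32300$ types, no chirotope or coplanarity constraints, no branching over the apex, and only $(V-1)!=9!$ witness assignments for $V=10$, each a system \eqref{eq:shadow} of nine \emph{homogeneous quadratic} non-strict inequalities plus one linear equation. The paper deliberately uses only the necessity direction, i.e.\ it proves infeasibility of the relaxed algebraic system without any convex-position constraints (and a solution for $V=8$ or $9$ pads with zero vectors to a nonzero solution for $V=10$, so only the $V=10$ systems need be run). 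Your enumeration, by contrast, multiplies tens of thousands of types by apices by at least $3^{9}$ neighbour-witness branches, a case count several orders of magnitude beyond the paper's ``hundreds of thousands,'' with each system encumbered by degree-3 determinant sign conditions and facet-coplanarity equalities.

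The genuine gap is in your certificates. For your mixed systems (strict chirotope inequalities, equalities, and the equilibrium constraints), a ``nonnegative combination of the constraint polynomials equal to $-1$'' is \emph{not} a complete proof system: the Positivstellensatz requires sum-of-squares multipliers and products of constraints, with no a priori degree bound, so even though every system is in fact infeasible, your fixed-degree SDP search may simply fail on some branch, and you cannot then distinguish ``realizable'' from ``certificate needs higher degree'' --- the proof would never close. The paper's homogenization is what makes the lowest-level certificate both sufficient in principle and, as it turns out, successful in all $9!$ cases: by Lemma~\ref{lem:certificate} one only needs nonnegative $c$ with $\sum_i c_iQ_i$ positive definite, found via the SDP \eqref{eq:SDP} and verified exactly by a rational $LDL^\T$ factorization (you do correctly identify the need for exact rational re-verification). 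A final loss: the paper's matrices have the Kronecker form $A\kron I_d$, so the certificates are dimension-independent and yield Corollary~\ref{thm:main-theorem-high-dim} (no mono-unstable $0$-skeleton with fewer than $11$ vertices in \emph{any} dimension) for free, whereas your chirotope encoding is hard-wired to $d=3$.
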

Since in \citep{h0gomboc} the authors presented examples with $V=11$ vertices, the essence of our paper is to prove the following:
\begin{theorem}\label{mainlemma}
No mono-unstable 0-skeletons exist with $V<11$ vertices.
\end{theorem}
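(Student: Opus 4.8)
The plan is to recast the existence of a mono-unstable $0$-skeleton as the feasibility of a system of polynomial inequalities in the vertex coordinates, and then to refute feasibility for every $V\le 10$. First I would place the center of mass at the origin, so that the vertices $v_1,\dots,v_V\in\R^3$ satisfy the single linear constraint $\sum_{k=1}^V v_k = 0$; translation, rotation and scaling may then be used to normalize, e.g.\ by fixing $v_1$ on a coordinate axis and fixing $|v_1|$. By Definition \ref{def1}, together with the fact that a linear functional attains its maximum over $P=\mathrm{conv}(v_1,\dots,v_V)$ at a vertex, a vertex $v_i$ is a nondegenerate unstable equilibrium with respect to the origin precisely when $\langle v_i - v_j,\, v_i\rangle > 0$ for every $j\ne i$; only the vertex coordinates enter, the full face lattice of $P$ being irrelevant to this condition. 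Hence ``$v_i$ is unstable'' is a conjunction of $V-1$ strict quadratic inequalities, and its negation ``$v_i$ is not unstable'' is the disjunction $\bigvee_{j\ne i}\{\langle v_i - v_j, v_i\rangle \le 0\}$.

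Assuming, towards a contradiction, that $v_1$ is the unique unstable vertex, the defining conditions become: (A) $\langle v_1 - v_j, v_1\rangle > 0$ for all $j\ge 2$, and (B) for each $i\ge 2$ there is a witness $j=\sigma(i)\ne i$ with $\langle v_i - v_{\sigma(i)}, v_i\rangle \le 0$. Fixing a witness map $\sigma$ turns (B) into a pure system of non-strict quadratic inequalities, so the whole problem splits into one polynomial feasibility problem per map $\sigma$. The number of such maps is $(V-1)^{V-1}$, which is enormous over the range $V\in\{8,9,10\}$ left open by the bound $V\ge 8$ of \citep{Bozoki}. Exploiting the $S_{V-1}$ symmetry of relabeling $v_2,\dots,v_V$, and discarding trivially infeasible cases --- for instance those inconsistent with the observation that the unique unstable vertex must be a farthest vertex, which furnishes the additional valid constraints $|v_1|^2 \ge |v_j|^2$ --- brings the number of genuinely distinct feasibility problems down to the hundreds of thousands that actually require certification. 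Since I only ever certify \emph{infeasibility}, convex-position constraints forcing each $v_i$ to be an extreme point can be appended harmlessly whenever the equilibrium inequalities alone do not suffice.

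For each surviving case I would certify that the resulting quadratic system has no real solution by a Positivstellensatz argument: exhibit sum-of-squares polynomials so that a weighted sum-of-squares combination of products of the constraint polynomials is identically equal to $-1$ modulo the ideal generated by $\sum_k v_k = 0$ and the normalizations. Evaluating such an identity at a hypothetical solution would force a nonnegative quantity to equal a negative constant, the desired contradiction. Each certificate is found numerically by solving a semidefinite program at a fixed relaxation degree, and this is precisely the step the algorithm of \citep{Bozoki} is adapted to carry out at scale. The proof is then assembled by ranging over $V=8,9,10$ (the values $V\le 7$ being already covered by \citep{Bozoki}) and verifying that every case admits such a certificate, whence no mono-unstable $0$-skeleton with $V<11$ can exist.

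The main obstacle is twofold, and both parts concern scale rather than any single hard inequality. The first is the combinatorial explosion: even after symmetry reduction and pruning, hundreds of thousands of distinct feasibility problems remain, so the certificate search must be fully automated and cheap per instance --- exactly what the adapted semidefinite machinery is meant to supply. The second, and more delicate, is rigor: a semidefinite solver returns only floating-point multipliers, so each numerical certificate must be rounded to exact rational data and re-verified symbolically, and the relaxation degree must be chosen large enough that a valid certificate provably exists in \emph{every} one of the many cases. Producing exact, independently checkable certificates uniformly across all cases, rather than for a handful of hand-picked ones, is where the real difficulty lies.
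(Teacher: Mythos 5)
Your proposal follows the paper's strategy in outline---reduce the existence question to a family of per-witness-map polynomial feasibility problems, certify infeasibility of each with an SDP-generated certificate, and round to exact rational data for independent verification---but it diverges from the paper at the two points that make the computation actually work, and one divergence leaves a real soft spot. First, the decomposition: you enumerate all $(V-1)^{V-1}$ witness maps ($9^9 \approx 3.9\times 10^8$ for $V=10$) and appeal to $S_{V-1}$-relabeling symmetry plus pruning to bring the count down, with the enumeration of orbit representatives left unspecified. The paper instead invokes Theorem \ref{thm:BDKR}, imported from \citep{Bozoki}: after sorting the vertices by decreasing distance from the centroid, every non-unstable vertex $r_i$ admits a witness of \emph{smaller index}, because $(r_i - r_j)^\T r_i \leq 0$ forces $\|r_i\|^2 \leq r_j^\T r_i \leq \|r_j\|\,\|r_i\|$, i.e.\ $\|r_j\| \geq \|r_i\|$. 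This collapses the case count to exactly $(V-1)! = 362880$ with no orbit bookkeeping at all. You actually state the $i=1$ instance of this fact (the unique unstable vertex is a farthest vertex) but use it only as a pruning heuristic; applied at every index it is precisely the missing reduction, and without it your symmetry-quotient argument is an unproven piece of combinatorics of exactly the kind the paper's Discussion warns is hard to audit for completeness.

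Second, certificate structure. By fixing $|v_1|$ and placing $v_1$ on a coordinate axis you dehomogenize the system, which pushes you toward general Positivstellensatz/SOS certificates at an unspecified relaxation degree---and you correctly flag ``is the degree large enough in every case'' as the central risk, a risk your normalization aggravates. The paper does the opposite: it eliminates $r_V$ via the centroid equation and keeps the system \emph{homogeneous quadratic}, so that nonexistence of nonzero solutions is certified by the crudest possible object: nonnegative scalars $c_i$ with $\sum_i c_i Q_i$ positive definite (Lemma \ref{lem:certificate}), i.e.\ degree-zero multipliers at the lowest Lasserre level. The empirical content of the paper is that this simplest certificate exists in all $9!$ cases (which also subsumes $V=8,9$), and verification reduces to a single rational $LDL^\T$ or leading-principal-minor check per case. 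Your extra constraints---the strict inequalities making $v_1$ unstable, and optional convex-position constraints---are harmless but unnecessary: the paper drops the former because every $0$-skeleton automatically has an unstable vertex. Finally, note what the homogeneous formulation buys beyond simplicity: since each $Q_i$ has the Kronecker form $A_i \kron I_d$, the certificates are independent of the ambient dimension $d$, which yields Corollary \ref{thm:main-theorem-high-dim} for free---a bonus your normalized, $d=3$-specific formulation would forfeit.
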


This is an improvement of the lower bound shown in \citep{Bozoki}:
\begin{theorem}[Theorem 1 in \citep{Bozoki}] \label{bozoki}
For $V<8$, no mono-unstable 0-skeletons exist.
\end{theorem}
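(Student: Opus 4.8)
The plan is to reduce the statement to the infeasibility of finitely many systems of polynomial inequalities in the vertex coordinates, and then to certify each infeasibility by semidefinite programming. Writing the $V$ vertices as $v_1,\dots,v_V\in\R^3$ and the center of mass of the $0$-skeleton as $c=\frac1V\sum_{i}v_i$, the supporting plane $H$ in Definition~\ref{def1} has outer normal $v_k-c$, so a vertex $v_k$ is an equilibrium exactly when $\langle v_k-c,\,v_j-v_k\rangle\le 0$ for every vertex $v_j$ (it suffices to test vertices, since a linear functional attains its maximum over $P=\mathrm{conv}\{v_i\}$ at a vertex). The equilibrium is nondegenerate and unstable, i.e. $H\cap P=\{v_k\}$, precisely when
\[
\langle v_k-c,\;v_j-v_k\rangle<0\qquad\text{for all }j\neq k .
\]
Thus ``$v_k$ is an unstable equilibrium'' is a purely algebraic condition on the $v_i$ and $c$.

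Next I would turn mono-unstability into a finite case split. Since the farthest vertex from $c$ is always unstable we have $U\ge 1$, so ruling out $U=1$ for $V\le 7$ amounts to showing that the semialgebraic set of configurations with exactly one unstable vertex is empty. Designating the unstable vertex (say $v_1$), the remaining requirements are ``$v_i$ is not unstable'' for $i\ge2$, each of which is the disjunction $\exists\,j\ne i:\ \langle v_i-c,\,v_j-v_i\rangle\ge0$. Resolving these disjunctions by choosing, for every $i\ge 2$, one witnessing index $j(i)$, together with the choice of the unstable vertex, produces $V\cdot(V-1)^{V-1}$ cases; for $V=7$ this is already $7\cdot6^{6}\approx3.3\times10^{5}$ systems --- the ``hundreds of thousands'' of the abstract. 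To each case I would add the constraints that every $v_i$ is an extreme point (encoded by fixing the combinatorial type, or by separation inequalities) so that the $v_i$ genuinely are the vertices of a $V$-vertex polyhedron, and I would eliminate the continuous symmetries (translating $c$ to the origin so $\sum_i v_i=0$, and fixing a rotation and a scale) to reduce each system to roughly $3V-7$ unknowns.

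Finally I would certify each system infeasible. For a fixed relaxation degree, a Positivstellensatz/sum-of-squares infeasibility certificate --- an identity writing a negative constant as a sum of the constraint polynomials weighted by sum-of-squares multipliers --- is the output of a single semidefinite feasibility problem; solving these, then rationalizing and exactly verifying the resulting certificates, proves each case empty, and the union over all cases and over $V\in\{4,5,6,7\}$ yields the theorem. The main obstacle is exactly this last step at scale: the Positivstellensatz guarantees that a certificate exists but gives no usable degree bound, so the relaxation degree must be kept low enough to be solvable yet high enough to succeed on every case; moreover each numerically obtained certificate must be converted into an exact (rational) identity before it constitutes a proof. Managing the sheer number of cases, the conditioning of the SDPs, and the strict-versus-nonstrict boundary --- so that no genuine mono-unstable configuration slips through a case --- is where the real work lies.
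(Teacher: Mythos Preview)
Your overall plan---reduce mono-unstability to finitely many quadratic systems in the vertex coordinates and then certify each system infeasible---is exactly the strategy the paper (and its predecessor) uses, but you miss the three simplifications that make it clean, and this leads you to over-engineer every step.

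First, the case split. The paper's Theorem~\ref{thm:BDKR} shows that if the vertices are suitably ordered (essentially by decreasing distance from the centroid), then each non-unstable vertex $r_i$ has a witness with \emph{smaller} index: $j_i\in\{1,\dots,i-1\}$. This is forced by Cauchy--Schwarz, since $(r_i-r_j)^\T r_i\le 0$ implies $\|r_i\|\le\|r_j\|$. Hence the number of systems is $(V-1)!$, not $V\cdot(V-1)^{V-1}$; for $V=7$ that is $720$, not $3.3\times10^5$. The ``hundreds of thousands'' in the abstract is $9!=362\,880$ for $V=10$, not the $V=7$ case you attribute it to.

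Second, you do not need convexity constraints, combinatorial types, strict inequalities for the unstable vertex, or a rotation/scale normalization. The paper simply shows that for every choice of the $j_i$ the homogeneous quadratic system \eqref{eq:shadow} has no non-zero solution in $\R^{dV}$; if the system has no solutions at all, in particular no solution coming from the vertices of a polytope exists. Adding extremality or separation constraints only makes the certificates harder to find and destroys the homogeneity that drives the next point.

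Third, because the remaining system is homogeneous quadratic, a general Positivstellensatz with unknown relaxation degree is overkill. It suffices (Lemma~\ref{lem:certificate}) to find nonnegative scalars $c_i$ with $\sum_i c_iQ_i\succ 0$; this is a single, small SDP per case, and the resulting rational $c$ is an exact certificate verifiable by an integer Cholesky/Sylvester check. The concern you raise about degree bounds and strict-versus-nonstrict boundaries therefore does not arise.
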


Since every 3-dimensional convex polytope with at least $11$ vertices has at least 8 faces, and the construction in \citep{h0gomboc} is a mono-unstable 0-skeleton with $8$ faces, this also proves that the minimum number of faces that a mono-unstable 0-skeleton may have is $8$.

Theorem \ref{mainlemma} is not just a quantitative generalization of Theorem \ref{bozoki}, and for two reasons:
first we note that (unlike Theorem \ref{bozoki}), due
to the existence of $V=11$ examples it can not be improved.
Second, the tools proving Theorem \ref{mainlemma} differ substantially from the tools used
in the proof of Theorem \ref{bozoki}: while the latter was proved using a randomized computer search for  certificates of infeasibility of certain polynomial systems, those tools have proved to be inefficient at going beyond the case $V=7$.
In the current paper, to resolve the cases $V=8,9,10$, we combine the techniques introduced in \citep{Bozoki} with semidefinite optimization to efficiently generate the hundreds of thousands of infeasibility certificates required to prove Theorem \ref{mainlemma}, demonstrating the superior power of these tools.

Beyond closing the gap for mono-unstable 0-skeletons in 3 dimensions, our computations also yielded an analogous result in dimensions two and higher:
\begin{corollary}\label{thm:main-theorem-high-dim}
Every mono-unstable 
0-skeleton in any dimension has at least 11 vertices.
\end{corollary}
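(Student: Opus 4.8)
The plan is to recast the question ``does a mono-unstable 0-skeleton with $V$ vertices exist in \emph{some} dimension?'' as a single dimension-free feasibility problem, and then to observe that the infeasibility certificates already produced for Theorem \ref{mainlemma} settle it. The starting point is that the unstable-equilibrium condition is intrinsic to the affine hull of the vertices: if $q$ is a vertex and $c$ the centroid of a 0-skeleton, then both $q$ and $c$ lie in $\mathrm{aff}(P)$, so the direction $q-c$ lies in the linear span of the vertex set, and whether the hyperplane through $q$ orthogonal to $q-c$ meets $P$ only in $q$ depends only on that span and not on the ambient space. Consequently a mono-unstable 0-skeleton with $V$ vertices in $\R^d$ restricts to a full-dimensional one in $\R^k$ with $k=\dim\mathrm{aff}(P)\le\min(d,V-1)$, and it suffices to rule out $V\le 10$ in every dimension $k\ge 2$ at once.

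Next I would encode the problem through the Gram matrix. Writing $w_i=v_i-c$ we have $\sum_i w_i=0$, and every relevant quantity is an affine function of the Gram matrix $G=(w_i\cdot w_j)_{i,j}$: vertex $1$ being the unique unstable equilibrium reads $w_j\cdot w_1 < w_1\cdot w_1$ for all $j\neq 1$, while vertex $i\ge 2$ failing to be unstable reads $w_j\cdot w_i \ge w_i\cdot w_i$ for \emph{some} $j\neq i$, and the centroid condition becomes $G\mathbf{1}=0$. The only place the dimension enters is through $\mathrm{rank}(G)=k$. Dropping the rank constraint and keeping merely $G\succeq 0$ therefore produces a \emph{dimension-free} semidefinite feasibility problem whose solutions of rank $k$ are exactly the mono-unstable 0-skeletons realizable in $\R^k$, once one also imposes that $w_1,\dots,w_V$ be in convex position. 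Hence this single problem is feasible if and only if a mono-unstable 0-skeleton with $V$ vertices exists in some dimension.

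The key observation is then that the Positivstellensatz / sum-of-squares certificates generated by the semidefinite-optimization machinery behind Theorem \ref{mainlemma} certify infeasibility using only $G\succeq 0$ together with the polynomial (in)equalities above, and never invoke a bound on $\mathrm{rank}(G)$. Such a certificate is therefore simultaneously a certificate of infeasibility of the rank-free relaxation, and so it rules out a mono-unstable 0-skeleton with that vertex count in \emph{every} dimension. Running the same enumeration and certification for $V=8,9,10$, together with the same rank-free treatment of the small cases $V\le 7$ (which are easier and consistent with Theorem \ref{bozoki}), thus yields $V\ge 11$ in all dimensions $\ge 2$, with equality attained by the three-dimensional construction.

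The hard part is not the dimensional reduction, which in this formulation comes for free, but the combinatorics underlying the certification. The ``no other unstable vertex'' requirement is disjunctive: for each of the remaining $V-1$ vertices at least one of its defining strict inequalities must be reversed, giving a branching tree whose number of leaves grows combinatorially in $V$ and accounts for the hundreds of thousands of separate infeasibility certificates. Keeping this enumeration complete and tractable via symmetry reduction and pruning, while faithfully encoding the convex-position constraints in the Gram variables, is where the real work lies; the payoff is that, once this is carried out in the rank-free formulation, exactly the same certificates establish the statement uniformly across all dimensions.
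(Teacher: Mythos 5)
Your proposal is correct and takes essentially the same route as the paper: the paper proves Corollary \ref{thm:main-theorem-high-dim} by noting that each $Q_i = A_i \kron I_d$, so positive definiteness of $\sum_i c_i Q_i$ is independent of $d$, and Section \ref{sec:dim-free} explicitly records your Gram-matrix reformulation (with the dimension entering only through $\mathrm{rank}(G)$, which the certificates never invoke) as an equivalent view of the same fact. Two minor corrections: convex position never needs to be encoded, since \eqref{eq:shadow} is used only as a necessary condition and infeasibility of the rank-free relaxation already rules out skeletons in every dimension, so the ``real work'' you locate there is moot; and the count of roughly $9!$ certificates comes not from symmetry pruning of a $(V-1)^{V-1}$-leaf disjunction over all $j\neq i$, but from the ordering restriction $j_i < i$ built into Theorem \ref{thm:BDKR} (which holds for arbitrary $d$, as needed).
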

We discuss this generalization in Section \ref{sec:dim-free}.

Our proof of Theorem \ref{mainlemma} is an easily verifiable computer-assisted proof generated using convex optimization. First, the statement of the theorem is translated to the unsolvability of several systems of polynomial inequalities following the work of \citet{Bozoki}; see Theorem \ref{thm:BDKR} below. Then the unsolvability of these systems is proved using a sufficient condition derived from linear algebra (Lemma \ref{lem:certificate}). The unsolvability certificates take the form of positive integer vectors that are generated using semidefinite optimization. The verification of these certificates can be carried out independently of the method they were generated with, simply by verifying that the generated integer vectors are indeed (strictly) feasible solutions of certain linear matrix inequalities.

Proving the infeasibility of systems of polynomial equations and inequalities and the equivalent problem of rigorously certifying lower bounds of polynomials on semialgebraic sets (that is, solution sets of polynomial inequalities) are becoming a fundamental tool in automated system verification and theorem proving \citep{deKlerkMaharryPasechnikRichterSalazar2006,deKlerk2016,MagronConstantinidesDonaldson2017,UhlmannWang2021}, with applications in various areas of engineering, operations research, and statistics, including
power systems engineering (optimal power flow) \citep{JoszMaeghtPanciaticiGilbert2015,GhaddarMarecekMevissen2016}, signal processing \citep{Dumitrescu2017}, and design of experiments \citep{Papp2012}. It has also been a particularly popular and successful technique in computer-assisted \emph{geometric} theorem proving. Although computer-assisted proofs in geometry go back at least to the celebrated work of \citet{Hales2005}, more recent work combining polynomial optimization and convex optimization techniques have resulted in \emph{easily verifiable} computer-assisted proofs of, for example, lower or upper bounds on optimal packings and other point configurations; see, e.g., \citep{BachocVallentin2008,BallingerBlekhermanCohnGiansiracusaKellySchurmann2009,Firsching2016,DostertDeLaatMoustrou2021} to name only a few.

Most of these works rely on semidefinite optimization to compute certifiable global lower bounds of polynomials (or trigonometric polynomials) over semialgebraic sets in a manner similar to our approach, and can also be interpreted as applications of Lasserre's moment relaxation of polynomial optimization problems \citep{CamposMisenerParpasEJOR2019,deKlerk2010,Lasserre2001,Laurent2009}. One major difference in our approach is that instead of formulating the problem as a single large-scale polynomial optimization problem, we work with a large number of small instances of polynomial optimization problems involving only quadratic polynomials whose infeasibility can be proved at the lowest level of the Lasserre hierarchy.

\revision{The question of existence of solutions of systems of quadratic inequalities is also directly related to the celebrated S-lemma, which in its original form characterizes consistent systems of \emph{two} not necessarily convex quadratics. See \citep{PolikTerlaky2007} for precise statements and an approachable and extensive review on this subject. Direct generalizations (without additional assumptions) are known to be impossible (as shown in the article cited above), although there is some literature on similar statements for larger systems of quadratics, e.g., \citep{JeyakumarLiWoolnough2021}, usually under assumptions that make the original proofs generalize to larger systems. To the best of our understanding, these results are not applicable to the systems that arise in our study.}

In what follows, we shall present the details of our proof without further references to the theory of moment relaxations, \revision{algebraic geometry}, or polynomial optimization, and derive it instead from basic linear algebraic principles.

\section{Proof of the main result}
In this section, we prove Theorem \ref{mainlemma} (and by extension, Corollary \ref{thm:main-theorem-high-dim}) by certifying the infeasibility of a number of systems of polynomial equations and inequalities--an idea introduced in \citep{Bozoki}. We rely on the same necessary condition of the existence of mono-unstable $0$-skeletons as in that paper, but improve on the search for infeasibility certificates using semidefinite optimization. \revision{Throughout, we shall assume (without loss of generality) that the center of mass $g$ is at the origin of our coordinate system.} The essential results we need from \citep{Bozoki} are summarized below in Theorem \ref{thm:BDKR}.
\begin{theorem}\label{thm:BDKR}
Let $r_i \in \R^d, (i=1,\dots,V)$ be the vertices of a convex polytope with
\begin{subequations}\label{eq:shadow}
\begin{equation}
\sum_{i=1}^V r_i = 0. \label{eq:shadow-eq}
\end{equation}
Then $r_1$ is the only unstable vertex of the $0$-skeleton of this polytope if and only if for every $i\in\{2,\dots,V\}$ there exists a $j_i\in\{1,\dots,i-1\}$ satisfying
\begin{equation}
(r_i - r_{j_i})^\T r_i \leq 0 \qquad i=2,\dots,V. \label{eq:shadow-ineq}
\end{equation}
\end{subequations}
\end{theorem}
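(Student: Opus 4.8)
The plan is to translate the statement about unstable vertices into elementary vector inequalities through the support-plane definition, and then let a single Cauchy--Schwarz estimate supply the index restriction $j_i < i$. First I would record the support-plane characterization. Because the center of mass is the origin by \eqref{eq:shadow-eq}, for a vertex $r_i$ the plane $H$ through $r_i$ and perpendicular to $[0,r_i]$ is $\{x : r_i^\T x = r_i^\T r_i\}$, and since the support function of a polytope is attained at a vertex, $H$ supports the polytope at $r_i$ exactly when $r_i^\T r_j \le r_i^\T r_i$ for every $j$. Under the standing nondegeneracy assumption, $r_i$ is (nondegenerate) unstable if and only if $H$ meets the polytope only in $r_i$, that is, if and only if $r_i^\T r_j < r_i^\T r_i$ for all $j \ne i$. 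Consequently $r_i$ fails to be unstable precisely when there is some $j \ne i$ with $(r_i - r_j)^\T r_i \le 0$, which is exactly the form of \eqref{eq:shadow-ineq}; all that remains is to control which indices $j$ may serve as such a witness.

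The engine of the proof is a monotonicity in the distances $\|r_i\|$. I would show that whenever $(r_i - r_j)^\T r_i \le 0$ with $i \ne j$, the witness is strictly farther from the origin: from $r_i^\T r_j \ge r_i^\T r_i > 0$ together with $r_i^\T r_j \le \|r_i\|\,\|r_j\|$ one gets $\|r_i\| \le \|r_j\|$, and the equality case of Cauchy--Schwarz would force $r_j = r_i$, which is excluded for distinct vertices; hence $\|r_j\| > \|r_i\|$. Two facts drop out at once: a vertex of maximal distance from the origin admits no witness and is therefore always unstable (so every $0$-skeleton has at least one unstable vertex), and, once the vertices are labeled by non-increasing distance $\|r_1\| \ge \dots \ge \|r_V\|$, every witness of $r_i$ has index strictly below $i$.

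With these in hand both directions are short. For the converse, if witnesses $j_i \in \{1,\dots,i-1\}$ exist, then each $r_i$ with $i \ge 2$ is not unstable, and since at least one vertex must be unstable it has to be $r_1$, uniquely. For the direct implication, if $r_1$ is the only unstable vertex then every $r_i$ with $i \ge 2$ is not unstable and so admits some witness $r_j$; the monotonicity gives $\|r_j\| > \|r_i\|$, hence $j < i$ in the chosen ordering, and we set $j_i = j$.

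The main obstacle I anticipate is reconciling the fixed labeling in the statement with the restriction $j_i < i$: the witness lemma only produces a witness of larger norm, so the equivalence is clean exactly when the vertices are ordered by non-increasing distance from the center of mass, and the argument must verify that this ordering is compatible with the unique unstable vertex sitting in position $1$ (which follows because the maximal-distance vertex is always unstable). A secondary point needing care is the strict-versus-nonstrict boundary: it is the nondegeneracy hypothesis that lets me identify ``not unstable'' with the closed inequality $(r_i - r_j)^\T r_i \le 0$ rather than its strict version, and the collinear equal-norm equality case of Cauchy--Schwarz must be ruled out by the distinctness of the vertices.
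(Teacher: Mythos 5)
Your proposal is correct, and it in fact supplies more than the paper itself does: the paper never proves Theorem \ref{thm:BDKR}, but imports it from \citep{Bozoki} and accompanies the statement only with a geometric intuition for the ``if'' direction (a witness $r_{j_i}$ forces the support plane at $r_i$ to meet the polytope in more than the single vertex $r_i$, and at least one vertex of a $0$-skeleton is unstable). Your ``if'' direction coincides with that sketch, made rigorous through the characterization that $r_i$ is a nondegenerate unstable equilibrium exactly when $r_i^\T r_j < r_i^\T r_i$ for all $j \neq i$. What you add beyond the paper is a self-contained treatment of the two points it delegates to the citation: the Cauchy--Schwarz monotonicity lemma (any witness lies strictly farther from the center of mass), which simultaneously yields the ``aforementioned fact'' that a maximal-norm vertex is always unstable --- a fact the paper invokes but never establishes --- and the index restriction $j_i < i$; and the observation, which you are right to flag, that the equivalence as literally stated presupposes the labeling convention $\|r_1\| \geq \cdots \geq \|r_V\|$. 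Without that convention the ``only if'' direction can fail, since a non-unstable vertex's witnesses need only have larger norm, not smaller label, and need not include $r_1$; the convention is part of the setup in \citep{Bozoki} and is harmless for the paper's use of the theorem, where the $r_i$ are free variables and infeasibility is certified for all $(V-1)!$ witness patterns, covering every labeling. One small gap to close in your writeup: you use $r_i^\T r_i > 0$ without justification; it holds because a vertex cannot coincide with the centroid of the vertices --- if $r_i = 0$, then \eqref{eq:shadow-eq} would express $r_i$ as a convex combination of the remaining vertices, contradicting its extremality. With that line added, your argument is a complete and correct proof of the cited theorem.
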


The geometric intuition behind this theorem is as follows. With the polytope's center of mass $g$ at the origin $0$ by Eq.~\eqref{eq:shadow-eq}, the interpretation of the inequality \eqref{eq:shadow-ineq} is that the line segment connecting vertices $r_i$ and $r_{j_i}$ forms a right or obtuse angle with the line segment that connects vertex $r_i$ and the center of mass. Therefore, if we attempt to balance the polytope on vertex $r_i$ by placing it on a horizontal support plane with the center of mass vertically above vertex $i$, then the $[r_i, r_{j_i}]$ line segment will be either below the support plane (if strict inequality holds in \eqref{eq:shadow-ineq}) or incident to it (in the case of equality). In either case, the horizontal support plane does not intersect the polytope in $r_i$ alone, and therefore the polytope is not at a (nondegenerate) unstable equilibrium. That $r_1$ is an unstable vertex in this case follows from the aforementioned fact that every 0-skeleton has at least one unstable vertex.

 Thus, to establish that no mono-unstable 0-skeletons with $V$ vertices exist, it is sufficient to prove that for all $(V-1)!$ choices of $j_i\in\{1,\dots,i-1\}$ \mbox{$(i=2,\dots,V)$}, the system of inequalities and equations \eqref{eq:shadow} has no non-zero solutions.

\subsection{Tractable infeasibility certificates}

Whether a system of polynomial inequalities is solvable over the reals is algorithmically decidable in the real number model using (for example) quantifier elimination methods \citep{Tarski1951,Renegar1992}. However, with their (at least) exponential running time in the number of variables, these exact procedures are prohibitively expensive to apply to our problem. Additionally, they do not produce easily checkable \emph{infeasibility certificates}. This means that if they conclude that the polynomial system in question does not have a solution, it is difficult to independently and efficiently verify that this conclusion was correct, leaving doubts about the validity of the computer-assisted proof.
In a similar fashion, we cannot rely on numerical QCQP solvers or other global optimization software to ``verify'' that the systems \eqref{eq:shadow} have no solutions. Even ignoring possible errors arising from the use numerical methods instead of exact arithmetic and the exponential running time (in the number of variables), these solvers also do not produce the infeasibility certificates we need for our rigorous proof.

Our approach to verify the unsolvability of all $(V-1)!$ systems \eqref{eq:shadow} is to look for efficiently computable and efficiently verifiable infeasbility certificates based on sufficient (but not necessary) conditions of infeasibility. The system \eqref{eq:shadow} can be simplified by expressing, say, each coordinate $r_{V,k}\,(k=1,\dots,d)$ of the last vertex $r_V$ as a linear combination of the other variables using \eqref{eq:shadow-eq} and substituting them back to \eqref{eq:shadow-ineq}, to obtain an equivalent system of $V-1$ homogeneous quadratic inequalities in $n := d(V-1)$ variables with integer coefficients. For such systems of inequalities, we can use the following sufficient condition of infeasibility:
\begin{lemma}\label{lem:certificate}
Consider the system of homogeneous quadratic inequalities
\begin{equation}\label{eq:homquad}
r^\T Q_i r \leq 0 \quad i=1,\dots,m,
\end{equation}
wherein each $Q_i \in \R^{n\times n}$ is a real symmetric matrix. If there exist nonnegative rational numbers $c_1,\dots,c_m$ such that the matrix $\sum_{i=1}^m c_i Q_i$ is positive definite, then \eqref{eq:homquad} does not have any non-zero solutions.
\end{lemma}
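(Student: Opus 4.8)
The plan is to prove the lemma by contradiction, using the elementary observation that a nonnegative linear combination of inequalities of the form ``$\leq 0$'' is again an inequality of the form ``$\leq 0$''. This is the ``weighted aggregation'' idea underlying weak LP/SDP duality and the S-procedure: the weights $c_i$ play the role of dual multipliers, and positive definiteness of the aggregated matrix is the certificate that the aggregation yields a contradiction.

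Concretely, I would suppose toward a contradiction that there exists a nonzero $r\in\R^n$ satisfying all the inequalities \eqref{eq:homquad}, i.e.\ $r^\T Q_i r \leq 0$ for every $i=1,\dots,m$. The key step is then to multiply the $i$-th inequality by its weight $c_i$ and sum: since each $c_i \geq 0$, each term $c_i\,(r^\T Q_i r)$ is nonpositive, and therefore $r^\T\!\left(\sum_{i=1}^m c_i Q_i\right) r = \sum_{i=1}^m c_i\,(r^\T Q_i r) \leq 0$. On the other hand, writing $M := \sum_{i=1}^m c_i Q_i$, the hypothesis that $M$ is positive definite gives $r^\T M r > 0$ for every nonzero $r$. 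These two conclusions contradict each other, so no nonzero solution can exist, which is exactly the claim.

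There is essentially no genuine obstacle in proving the lemma itself; the argument is a two-line contradiction and the ``hard part'' has deliberately been pushed elsewhere. I would emphasize two points to forestall confusion. First, the rationality of the $c_i$ plays no role in the mathematical validity of the argument---only their nonnegativity is used---and is required solely so that the certificate $(c_1,\dots,c_m)$, together with the positive definiteness of $M$, can be verified exactly and independently (e.g.\ by an exact symmetric-matrix positive-definiteness test). Second, the real difficulty lies not in this sufficient condition but in \emph{producing} suitable weights $c_i$ for each of the $(V-1)!$ systems; that search is where semidefinite optimization enters, and it is taken up in the remainder of this section.
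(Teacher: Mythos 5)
Your proof is correct and takes essentially the same route as the paper, which also treats the implication as an immediate consequence of positive definiteness (your two-line weighted-sum contradiction simply spells this out) and likewise notes that rationality of the $c_i$ is irrelevant to validity. The only addition in the paper is a brief perturbation remark --- real nonnegative weights can be nudged to nearby positive rationals preserving positive definiteness --- which explains why restricting to rational certificates loses nothing; this is not needed for the implication as stated, so your omission of it is not a gap.
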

\begin{proof}
Leaving out the requirement that $c$ be rational, the statement is an immediate consequence of the definition of positive definiteness. Regarding rationality, if there exists a (not necessarily rational) nonnegative real vector $c$ such that $\sum_{i=1}^m c_i Q_i$ is positive definite, then its positive components can be perturbed to (arbitrarily close) positive rational numbers, resulting in a nonnegative rational vector satisfying the same.
\end{proof}

To expound on the application of Lemma \ref{lem:certificate} to Theorem \ref{thm:BDKR}, we first explicitly write the system \eqref{eq:shadow-ineq} in the form \eqref{eq:homquad}, ignoring the equations \eqref{eq:shadow-eq}. Stacking the coordinate vectors $r_1,\dots,r_V$ of the vertices into a single column vector $r\in\R^{Vd}$, each matrix $Q_i$ can be described as a $V \times V$ block matrix made up of blocks of size $d\times d$. Collecting the coefficients of the homogeneous quadratic
\begin{equation*} (r_i - r_{j_i})^\T r_i = \sum_{k=1}^{d} r_{i,k}^2 - r_{i,k}r_{j_i,k},
\end{equation*}
and keeping in mind that (by definition) each $Q_i$ is a symmetric matrix, we see that that for each $i = 2,\dots V$, the $(i,i)$-th block is the identity matrix $I_{d\times d}$, while the $(i,j_i)$-th and $(j_i,i)$-th blocks are $-\frac{1}{2}I_{d\times d}$. All other blocks are zero. In summary, the inequalities \eqref{eq:shadow-ineq} can be written as $r^\T Q_i r \leq 0  \,\,\, (i=2,\dots,V)$ with
\begin{equation}\label{eq:Q-concrete}
Q_i = (E^V_{ii} - E^V_{ij_i}/2 - E^V_{j_ii}/2) \kron I_d,
\end{equation}
\revision{where $E^V_{ij}$ is the $V\times V$ unit matrix whose $(i,j)$-th entry is $1$ and all other entries $0$}, and $\kron$ denotes the Kronecker product.

To complete the formulation, we backsubstitute $r_V = -\sum_{i=1}^{V-1} r_i$ from \eqref{eq:shadow-eq} into our system. Since $j_i < i \leq V$ for each $i$, this only affects $Q_2, \dots, Q_{V-1}$ by eliminating the $V$-th block row and column (which are all zeros). We can also determine the new $Q_V$ in closed form: since
\[ \revision{r_V^\T r_V - r_V^\T r_{j_V} = \Big(\sum_{i,j=1}^{V-1} r_i^\T r_j\Big) + \Big(\sum_{i=1}^{V-1} r_i^\T r_{j_V}\Big)},\]
we have
\[ Q_V = \Big(J_{V-1} + \frac{1}{2}\sum_{i=1}^{V-1} \left( E^{V-1}_{i, j_V} + E^{V-1}_{j_V, i} \right) \Big) \kron I_{d}, \]
where $J_{V-1}$ is the $(V-1)\times(V-1)$ all-ones matrix and \revision{$E$ denotes unit matrices as defined above}.


We can find coefficients $c_i$ satisfying the condition in Lemma \ref{lem:certificate} using semidefinite optimization. In the following, we use the common shorthand $A\succcurlyeq B$ for the relation that the matrix $A-B$ is positive semidefinite.

\begin{corollary}\label{cor:SDP}
Let $Q_1,\dots,Q_m \in \R^{n\times n}$ real symmetric matrices, and consider the following semidefinite optimization problem:
\begin{equation}\label{eq:SDP}
\begin{aligned}
&\maximize_{z\in\R,c\in\R^m}\quad    && z\\
&\st &&\sum_{i=1}^m c_i Q_i \succcurlyeq z I\\
&&& \|c\|_2 \leq 1\\
&&& c_i \geq z \qquad i=1,\dots,m.
\end{aligned}
\end{equation}
The optimal value of \eqref{eq:SDP} is positive if and only if there exist positive rational numbers $c_1,\dots,c_m$ such that the matrix $\sum_{i=1}^m c_i Q_i$ is positive definite. Any rational feasible solution $(z,c)$ of \eqref{eq:SDP} with $z>0$ is a certificate for the non-existence of non-zero solutions of the system \eqref{eq:homquad}.
\end{corollary}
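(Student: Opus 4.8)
The plan is to derive the corollary almost entirely from Lemma \ref{lem:certificate}, using only the scaling structure of the program \eqref{eq:SDP} together with the openness of the cone of positive definite matrices. Before proving the equivalence I would record well-posedness: the point $(z,c)=(0,0)$ is feasible (there $\sum_i c_i Q_i = 0 \succcurlyeq 0$ and $0 \geq 0$), so the optimal value is at least $0$; restricting attention to $z \geq 0$, the constraint $\|c\|_2 \leq 1$ bounds $c$ and hence $\sum_i c_i Q_i$, so $z$ is bounded above and the feasible set is compact, whence the maximum is attained. Consequently the optimal value is positive exactly when some feasible $(z,c)$ has $z>0$, which is the formulation I would use throughout.

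For the ``if'' direction, suppose $c\in\mathbb{Q}^m$ has all $c_i>0$ and that $M := \sum_{i=1}^m c_i Q_i$ is positive definite, so $\lambda_{\min}(M)>0$. I would rescale by $t := 1/\|c\|_2 > 0$, setting $\tilde c := t c$, so that $\|\tilde c\|_2 = 1$ and $\sum_i \tilde c_i Q_i = tM$. Choosing $z := \min\{\, t\,\lambda_{\min}(M),\ t\min_i c_i \,\}$ makes both $tM \succcurlyeq zI$ and $\tilde c_i \geq z$ hold simultaneously, and $z>0$ because $t$, $\lambda_{\min}(M)$, and $\min_i c_i$ are all positive. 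Hence $(z,\tilde c)$ is feasible with positive objective.

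For the ``only if'' direction, take a feasible $(z,c)$ with $z>0$. Feasibility gives $\sum_i c_i Q_i \succcurlyeq zI$, which is positive definite since $z>0$, together with $c_i \geq z > 0$ for every $i$; thus $c$ is a (possibly irrational) positive real certificate. The only non-routine step, and the one I expect to require the most care, is passing to rational coefficients: since positive definiteness is an open condition and each $c_i$ is strictly positive, every $c_i$ can be replaced by a sufficiently close positive rational without destroying either $c_i>0$ or the positive definiteness of the combination, exactly as in the rationality argument inside Lemma \ref{lem:certificate}. At this point the roles of the two auxiliary constraints become transparent: $\|c\|_2 \leq 1$ keeps the program bounded, so that ``positive optimal value'' is meaningful and attained, while $c_i \geq z$ is precisely what upgrades nonnegativity to the strict positivity needed for the perturbation.

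Finally, the certificate claim is immediate: if $(z,c)$ is a rational feasible point with $z>0$, then $c_i \geq z > 0$ shows $c$ is a nonnegative rational vector and $\sum_i c_i Q_i \succcurlyeq zI$ is positive definite, so Lemma \ref{lem:certificate} already guarantees that \eqref{eq:homquad} has no nonzero solution. I would close by emphasizing that verifying such a certificate requires no optimization at all: one only checks that $c$ is nonnegative and rational and that $\sum_i c_i Q_i - zI$ is positive semidefinite for the rational $z>0$, for instance by a rational Cholesky factorization or by evaluating leading principal minors, which is an exact and independently checkable computation.
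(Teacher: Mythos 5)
Your proposal is correct and takes essentially the same approach as the paper, which states the corollary without a separate proof as an immediate consequence of Lemma \ref{lem:certificate}: your rescaling of $c$ onto the unit sphere with $z=\min\{t\,\lambda_{\min}(M),\,t\min_i c_i\}$, and the rational-perturbation step (already contained in the lemma's own proof) are exactly the implicit ingredients, and your observations about the roles of the constraints match the paper's remarks that $\|c\|_2\leq 1$ merely bounds the program and that $z$ is irrelevant to the certificate beyond being positive.
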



Semidefinite optimization models such as \eqref{eq:SDP} are typically solved using numerical methods, which compute solutions that may be only approximately feasible or approximately optimal. This is of no concern for our proof, as we only need to find a componentwise positive feasible solution to \eqref{eq:SDP}. (The purpose of the norm constraint on $c$ is to ensure that the problem is bounded, and can safely be violated.) As long as the maximum value of $z$ is sufficiently positive (compared to the precision of the floating point computation), the approximately feasible and approximately optimal solution returned by a numerical semidefinite optimization method already serves as a rigorous proof of the non-existence of solutions of \eqref{eq:homquad} by \mbox{Lemma \ref{lem:certificate}}.

Thus, to prove that every 3-dimensional mono-unstable 0-skeleton has at least 11 vertices, we run the following algorithm: for every choice of $(j_2,\dots,j_{10}) \in \{1\} \times \{1,2\} \times \cdots \times \{1,\dots,9\}$, we transform the corresponding system \eqref{eq:shadow} to an equivalent system of homogeneous quadratic inequalities of the form \eqref{eq:homquad} by expressing each $r_{10,k}\; (k=1,2,3)$ as a linear combination of the other variables using \eqref{eq:shadow-eq} and substituting them back to \eqref{eq:shadow-ineq}, and then we solve the corresponding semidefinite optimization problem \eqref{eq:SDP} using a numerical method to prove that the system \eqref{eq:homquad} has no non-zero solutions.

The independently verifiable computer-generated proof is the list of positive rational vectors $c$ (one for each permutation) returned by the semidefinite optimization algorithm. (The $z$ component of the optimal solution is irrelevant as long as it is positive, and is not part of the infeasibility certificate.) The correctness of these vectors can be verified efficiently in rational arithmetic: it suffices to verify that the matrix $\sum_{i=1}^m c_i Q_i$ is positive definite, which can be carried out in polynomial time in rational arithmetic, say, using the $LDL^\T$ form of Cholesky decomposition or by verifying the positivity of the determinant of each leading principal submatrix.

\begin{example}
Let $V=10$ and $j_i = i-1$ for $i=2,3,\ldots,10$. Then $\sum_{i=2}^{10} c_i Q_i =$

\begin{eqnarray*}
\hspace*{-11mm}
\left(
\normalsize{
\begin{matrix}
c_{10} & -\frac{1}{2} c_{2}+c_{10} & c_{10} & c_{10} & c_{10}   \\[2mm]  
-\frac{1}{2} c_{2}+c_{10} & c_{2}+c_{10} & -\frac{1}{2} c_{3}+c_{10} & c_{10} & c_{10}   \\[2mm]  
c_{10} & -\frac{1}{2} c_{3}+c_{10} & c_{3}+c_{10} & -\frac{1}{2} c_{4}+c_{10} & c_{10}   \\[2mm]  
c_{10} & c_{10} & -\frac{1}{2} c_{4}+c_{10} & c_{4}+c_{10} & -\frac{1}{2} c_{5}+c_{10}   \\[2mm] 
c_{10} & c_{10} & c_{10} & -\frac{1}{2} c_{5}+c_{10} & c_{5}+c_{10}   \\[2mm]  
c_{10} & c_{10} & c_{10} & c_{10} & -\frac{1}{2} c_{6}+c_{10}   \\[2mm] 
c_{10} & c_{10} & c_{10} & c_{10} & c_{10}   \\[2mm]  
c_{10} & c_{10} & c_{10} & c_{10} & c_{10}   \\[2mm]  
\frac{3}{2} c_{10} & \frac{3}{2} c_{10} & \frac{3}{2} c_{10} & \frac{3}{2} c_{10} & \frac{3}{2} c_{10}    \\  
\end{matrix}} 
\right.
\end{eqnarray*}
\begin{eqnarray*}
\hspace*{30mm}
\normalsize
\left.
\begin{matrix}
 c_{10} & c_{10} & c_{10} & \frac{3}{2} c_{10}   \\[2mm]  
 c_{10} & c_{10} & c_{10} & \frac{3}{2} c_{10}   \\[2mm]  
 c_{10} & c_{10} & c_{10} & \frac{3}{2} c_{10}   \\[2mm]  
 c_{10} & c_{10} & c_{10} & \frac{3}{2} c_{10}   \\[2mm] 
 -\frac{1}{2} c_6+c_{10} & c_{10} & c_{10} & \frac{3}{2} c_{10}   \\[2mm]  
 c_{6}+c_{10} & -\frac{1}{2} c_{7}+c_{10} & c_{10} & \frac{3}{2} c_{10}   \\[2mm] 
 -\frac{1}{2} c_{7}+c_{10} & c_{7}+c_{10} & -\frac{1}{2} c_{8}+c_{10} & \frac{3}{2} c_{10}   \\[2mm]  
 c_{10} & -\frac{1}{2} c_{8}+c_{10} & c_{8}+c_{10} & -\frac{1}{2} c_{9}+ \frac{3}{2} c_{10}   \\[2mm]  
 \frac{3}{2} c_{10} & \frac{3}{2} c_{10} & -\frac{1}{2} c_{9}+\frac{3}{2} c_{10} & c_{9}+2 c_{10}   \\  
\end{matrix}
\right) \kron I_{3}
\end{eqnarray*}

The vector of coefficients $(c_2, c_3, \ldots, c_{10}) = (1, 4, 7, 8, 8, 7, 5, 4, 2)$, see also in  the last row of the supplemented csv file, makes the $9\times 9$ matrix above positive definite.
\end{example}

\subsection{A dimension-free view}\label{sec:dim-free}

Another look at the explicit form of the $Q_i$ matrices from \eqref{eq:Q-concrete} reveals a surprising fact. Since for every real symmetric matrix, the eigenvalues of $A \kron I_d$ are the same as the eigenvalues of $A$, only the multiplicities of the eigenvalues differ, the matrix
\[\sum_{i=1}^m c_i Q_i = \sum_{i=1}^m c_i ((E^V_{ii} - E^V_{ij_i}/2 - E^V_{j_ii}/2) \kron I_d) = \left(\sum_{i=1}^m c_i (E^V_{ii} - E^V_{ij_i}/2 - E^V_{j_ii}/2)\right) \kron I_d\] is positive definite if and only if $\sum_{i=1}^m c_i (E^V_{ii} - E^V_{ij_i}/2 - E^V_{j_ii}/2)$ is positive definite. That is to say, our approach of using Lemma \ref{lem:certificate} to prove the infeasibility of the sytem \eqref{eq:shadow} can only work if the system \eqref{eq:shadow} has no solution \emph{for any dimension $d$}. Although we are mainly concerned with 3-dimensional polytopes, this means that we have also shown that no mono-unstable polytopes with fewer than 11 vertices exist \emph{in any embedding dimension} (stated in the Introduction as Corollary \ref{thm:main-theorem-high-dim}).

It is important to note \revision{(also from the S-lemma's point of view, referred in the end of Section 1)} that the converse is not true: our proof technique will fail (a simple infeasibility certificate $c$ in Corollary \ref{cor:SDP} will not be found) if a mono-unstable polytope with $V$ vertices exists in \emph{any} dimension $d$, but this failure does not immediately reveal the dimensions $d$ for which a polytope or any other solution to the system \eqref{eq:shadow} exists. 
In particular, it is not difficult to show that for $d=1$ the system \eqref{eq:shadow} does not have a solution for any $V$ and any choice of $j_2,\dots,j_V$. Yet, our technique can only prove this for $V<11$, since for dimensions $d\geq 2$ a solution exists.

On the same note, following the idea of \citet{Dawson1985}, a simple perturbation argument makes it clear that if for some choice of $d$, $V$, and $j_i$ there exists a strictly feasible solution to \eqref{eq:shadow} that corresponds to the vertices of a convex polyhedron, then the same is true for the same choice of $V$ and $j_i$ in all higher dimensions. Since a mono-monostatic convex polygon with $11$ vertices was recently constructed by \citet[Figure~1]{h0gomboc}, this proves the existence of a mono-unstable $0$-skeleton with $11$ vertices in all dimensions $d \in \{2,\dots,11\}$.

Applying our proof technique in the one-dimensional case is also equivalent to what is sometimes referred to as the \emph{Gram matrix method} in convex algebraic geometry. Notice that the inequalities \eqref{eq:shadow-ineq} only depend on the vertex coordinates $r_i$ through their inner products $r_i^\T r_j$; furthermore, the center-of-mass equation \eqref{eq:shadow-eq} can also be equivalently written in terms of these inner products as
\[ 0 = \Big\|\sum_{i=1}^V r_i\Big\|_2^2 = \sum_{i=1}^V\sum_{j=1}^V r_i^\T r_j. \]
Therefore, if we consider the \emph{Gram matrix}
\[ R = (r_i^\T r_j)_{i,j=1,\dots,V}\]
associated with the vectors $r_1,\dots,r_V$, then the non-existence of a mono-unstable polytope in $d$ dimensions is implied, by virtue of Theorem \ref{thm:BDKR}, by the non-existence of a $V \times V$ symmetric, positive semidefinite, rank-$d$ matrix $R$ with the following two properties:
\begin{enumerate}
	\item $\sum_{i,j=1}^V R_{ij} = 0$.
	\item For each $i\in\{2,\dots,V\}$ there exists a $j \in \{1,\dots,i-1\}$ for which $R_{ii} \leq R_{ij}$.
\end{enumerate}
The rank condition in the above statement is computationally challenging, but to prove the non-existence of solutions, it is sufficient to prove that no positive semidefinite matrix (of any rank) satisfying the above two conditions exists. This leads to another semidefinite programming formulation, which is equivalent to \eqref{eq:SDP}.

\subsection{Implementation}

The algorithm was implemented using the semidefinite programming solver CSDP \citep{csdp}, interfaced using Mathematica, on a standard desktop computer. The enumeration and solution of the $9!$ optimization problems took approximately half an hour.

\revision{The numerical solutions (specifically, the near-optimal, near-feasible vectors $c$ obtained from CSDP) are rational numbers that were confirmed using rational arithmetic to be feasible solutions of \eqref{eq:SDP}. Since the numerical solutions are rational numbers represented in double precision floating-point arithmetic, this is an easy and efficient step, which does not involve any rational numbers with large bit sizes. Thus, in principle, these floating-point vectors themselves could be used as the rational certificates in Corollary \ref{cor:SDP}. Purely for the ease of dissemination and verification, these vectors were then further scaled up to positive integer vectors (recall that a positive multiple of an infeasibility certificate is also an infeasibility certificate), and then ``rounded'' to integer vectors with smaller components, once again confirming in rational arithmetic that the resulting vectors are still correct infeasibility certificates for their respective systems.}

The list of the computed \revision{integer} $c$ vectors certifying the unsolvability of the systems \eqref{eq:shadow} can be found in the public repository \url{https://github.com/dpapp-github/mono-unstable}. This, along with the proof of Theorem \ref{thm:BDKR}, serves as the independently verifiable computer-assisted proof of Theorem \ref{mainlemma}.

\section{Discussion}

\subsection{Improving the results about the mechanics of polyhedra}
Our result fixes the minimally necessary number of vertices as $V=11$ for a mono-unstable 0-skeleton and, via the theorem of Steinitz \citep{Steinitz2}, also the minimal number of faces. By the construction in \citep{h0gomboc} we know that these bounds are sharp, i.e. that the $V=11,F=8$ values  are not only necessary but also sufficient to create a mono-unstable 0-skeleton.

In the original problem we did not specify the number $S$ of stable equilibria, i.e. the question was to find the minimal number of vertices (and faces) for $U=1$, for \emph{any} value of $S$.  Since the constructions in \citep{h0gomboc} have $S=2$ or $S=3$ stable equilibria, consequently, for any $S>3$, the question remains open.

While we expect that for very modest increase of $S$ (e.g. for $S=4$) the same combinatorial values $(F,V)=(8,11)$ may remain valid as necessary and sufficient, this will definitely change as $S$ is further increased. In fact, the theorem of Steinitz also states that for $V=11$ vertices the maximal number of faces is $F=18$. So, if we prescribe $S=19$ stable equilibria (beyond the single unstable one), we will certainly have to have $V>11$ vertices.

While this problem is slightly different in nature from the one resolved in the current manuscript, our method could still be used to explore it: although our approach was primarily designed for improving lower bounds, it also aids the search for monostatic polyhedra. For example, in the problem studied in this paper, the method certified not only the infeasibility of all $9!$ systems for $V=10$, but also the infeasibility of the majority of the systems for $V=11$. This makes it easier to conduct a targeted search for mono-unstable $0$-skeletons with $V\geq 11$ vertices. In case of $V=11$, 
for many of the systems not certified by the method, it was straightforward to find a solution that also corresponded to the vertices of a convex polytope (for $d=3$) or polygon (for $d=2$). We expect that, to some extent, this could also be done for higher values of $V$.

\subsection{Improving the algorithm}
From the point of view of semidefinite optimization, our algorithm could be certainly made more efficient. For example, the number of cases to individually certify could be substantially lowered by eliminating those which are equivalent under a change of variables. The exploitation of such symmetries may dramatically lower the number of certificates to compute and may be an indispensable ingredient in resolving other problems in this area, where the number of cases is too large to allow their complete enumeration.
Since inequality (\ref{eq:shadow-ineq}) implies $\left|{r}_i \right| \leq \left|{r}_{j_i} \right|$ \citep[Lemma 2]{Bozoki}, moreover, $\left|{r}_i \right| < \left|{r}_{j_i} \right|$ for different nonzero vectors, it also induces a transitive binary relation on the vertices. This decreases the number of relevant cases from $(V-1)!$ 
to the number of rooted trees on $V$ vertices.
However, this advantage is coupled  with the drawback that the correctness and completeness of the computer-generated certificates becomes much harder to verify. Since our aim is that our results remain verifiable as simply as possible, we keep all the cases in the supplementary files. \revision{Although we hope that in this manner, the interested reader will find the verification of the certificates to be a very simple matter using any computer algebra system, we have also supplied an independently written computer code (purposely written in a different programming language than the code that generates the certificates).}

\section*{Acknowledgements}
The authors are grateful to the editor and the anonymous reviewers for their 
constructive comments. Special thanks to the anonymous reviewer for suggesting to explore the connection to the Gram matrix method.
The authors thank the anonymous reviewers and editors of our previous paper \citep{Bozoki} for their valuable suggestions. 

\paragraph{Funding} DP: This material is based upon work supported by the National Science Foundation under Grant No. DMS-1847865. GD, KR:
The support of the NKFIH Hungarian Research Fund grant 134199 and of the NKFIH Fund TKP2021 BME-NVA, carried out at the Budapest University of Technology and Economics, is kindly acknowledged.
KR: This research has been supported by the program ÚNKP-22-3 by ITM and NKFIH. The gift representing the Albrecht Science Fellowship is gratefully appreciated. SB: The research has been supported in part by the TKP2021-NKTA-01 NRDIO grant.

\bibliographystyle{elsarticle-harv}
\bibliography{EJOR-R3}

\end{document}